 \newtheorem{thm}{Theorem}[section]
 \newtheorem{prop}[thm]{Proposition}
 \theoremstyle{definition}
 \theoremstyle{remark}
 \newtheorem{rem}[thm]{Remark}
 \numberwithin{equation}{section}
\newcommand{\ZZ}{\mathbb{Z}}
\newcommand{\QQ}{\mathbb{Q}}
\newcommand{\LL}{\mathbb{L}}
\newcommand{\CC}{\mathbb{C}}
\newcommand{\Aa}{\mathbb{A}}
\newcommand{\Hom}{\mathrm{Hom}}
\newcommand{\Ext}{\mathrm{Ext}}
\newcommand{\Biext}{\mathrm{Biext}}
\newcommand{\uHom}{\underline{\mathrm{Hom}}}
\newcommand{\bBiext}{\mathrm{\mathbf{Biext}}}
\newcommand{\bR}{\mathrm{\mathbf{R}}}
\newcommand{\W}{\mathrm{W}}
\newcommand{\T}{\mathrm{T}}
\newcommand{\cD}{\mathcal{D}}
\newcommand{\cC}{\mathcal{C}}
\newcommand{\cH}{\mathcal{H}}
\newcommand{\cO}{\mathcal{O}}
\newcommand{\DMeffgm}{\mathrm{DM}^{\mathrm{eff}}_{\mathrm{gm}}}
\newcommand{\DMeffmeno}{\mathrm{DM}^{\mathrm{eff}}_{-}}
\newcommand{\DMeffmenoet}{\mathrm{DM}^{\mathrm{eff}}_{-,{\mathrm{\acute{e}t}}}}
\newcommand{\cMR}{\mathcal{MR}}
\newcommand{\Sm}{\mathrm{Sm}}
\newcommand{\SmCor}{\mathrm{SmCor}}
\newcommand{\Sh}{\mathrm{Sh}_{\mathrm{Nis}}}
\begin{document}

\title[Biextensions and 1-motives]
{Biextensions of 1-motives in Voevodsky's category of motives}

\author{Cristiana Bertolin and Carlo Mazza}

\address{NWF-I Mathematik, Universit\"at Regensburg, D-93040 Regensburg}
\email{cristiana.bertolin@mathematik.uni-regensburg.de}

\address{Dip. di Matematica, Universit\`a di Genova, Via Dodecaneso 35, I-16133 Genova}
\email{mazza@dima.unige.it}

\subjclass{14F, 14K}

\keywords{multilinear morphisms, biextensions, 1-motives}




\begin{abstract}
Let $k$ be a perfect field. In this paper we prove that biextensions of 1-motives define multilinear morphisms between 1-motives in Voevodsky's triangulated category ${\DMeffgm} (k,{\QQ})$ of effective geometrical motives over $k$ with rational coefficients.
\end{abstract}


\maketitle


\tableofcontents

\section*{Introduction}

Let $k$ be a perfect field. In~\cite{O} Orgogozo constructs a fully faithful functor
\begin{equation}\label{orgogozo}
 {\cO}: {\cD}^b(1-\mathrm{Isomot}(k)) \longrightarrow {\DMeffgm} (k,{\QQ})
\end{equation}
from the bounded derived category of the category $1-\mathrm{Isomot}(k)$ of 1-motives over $k$ defined modulo isogenies to Voevodsky's triangulated category ${\DMeffgm} (k,{\QQ})$ of effective geometrical motives over $k$ with rational coefficients. If $M_i$ (for $i=1,2,3$) is a 1-motive defined over $k$ modulo isogenies, in this paper we prove that the group of isomorphism classes of biextensions of $(M_1,M_2)$ by $M_3$ is isomorphic to the group of morphisms of the category ${\DMeffgm} (k, {\QQ})$ from the tensor product ${\cO}(M_1)\otimes_{tr} {\cO}(M_2)$ to ${\cO}(M_3)$:

\begin{thm}\label{mainthm}
 Let $M_i$ (for $i=1,2,3$) be a 1-motive defined over a perfect field $k$. Then
\[ {\Biext}^1(M_1,M_2;M_3) \otimes {\QQ} \cong {\Hom}_{{\DMeffgm} (k,{\QQ})}({\cO}(M_1) \otimes_{tr} {\cO}(M_2), {\cO}(M_3)) . \]
\end{thm}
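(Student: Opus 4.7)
The plan is to exploit full faithfulness of Orgogozo's functor $\cO$ in~\eqref{orgogozo} to convert the right-hand side into a $\Hom$ computation in $\cD^b(1-\mathrm{Isomot}(k))$, and then match it to a derived reinterpretation of Grothendieck biextensions on the left-hand side. Writing each $M_i = [L_i \to G_i]$ with $L_i$ a lattice and $G_i$ a semi-abelian variety, I would first give an explicit two-term description of $\cO(M_i)$ in terms of the Voevodsky motives of $L_i$ and $G_i$, and then compute $\cO(M_1) \otimes_{tr} \cO(M_2)$ using the multiplicative structure of $\DMeffgm(k,\QQ)$ together with the Chow--K\"unneth decomposition of the motive of an abelian variety. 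The target is to produce a complex $C^{\bullet}$ of 1-isomotives whose image under $\cO$ represents $\cO(M_1) \otimes_{tr} \cO(M_2)$ up to quasi-isomorphism; full faithfulness would then identify the right-hand side of the theorem with an appropriate $\Ext$-group of $C^{\bullet}$ into $M_3$ in the category of 1-isomotives.

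On the biextension side I would use Grothendieck's interpretation of $\Biext^1$ as bilinear $\Ext^1$-classes, which with $\QQ$-coefficients admits a small projective-style resolution built from pure 1-motives (lattices, tori, abelian varieties). A d\'evissage on the weight filtration of each $M_i$ would then reduce the theorem to a finite list of pure sub-cases: pairs of lattices, a lattice paired with a torus or with an abelian variety, a torus and an abelian variety paired with $\GG_m$, and two abelian varieties paired with $\GG_m$ (the Poincar\'e biextension). In every such sub-case the biextension group is classical, and the motivic side is directly computable from the well-understood structure of Voevodsky motives of weight at most one with rational coefficients.

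I expect the main obstacle to lie in the first step: proving that $\cO(M_1) \otimes_{tr} \cO(M_2)$ really does come from a small complex $C^{\bullet}$ of 1-isomotives, and identifying that $C^{\bullet}$ with the one computing the Grothendieck biextension after applying $\Hom(-, M_3)$. This amounts essentially to a K\"unneth-type formula for 1-motives with rational coefficients, combined with careful bookkeeping of the cohomological shifts introduced by Orgogozo's conventions versus those implicit in the biextension-theoretic $\Ext^1$. Once this compatibility is established, assembling the pure-weight verifications into the mixed statement should follow by linearity and functoriality of both sides in each of the three arguments.
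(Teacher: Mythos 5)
Your plan hinges on the claim that $\cO(M_1) \otimes_{tr} \cO(M_2)$ is represented (up to quasi-isomorphism) by a complex $C^{\bullet}$ of 1-isomotives, so that full faithfulness of $\cO$ applies. This is false in general, and it is not merely the ``main obstacle'' but an actual obstruction: already for $M_1=M_2=[0\to A]$ with $A$ an abelian variety of dimension $g\geq 2$, the tensor product $\cO(M_1)\otimes_{tr}\cO(M_2)$ contains direct summands such as $\wedge^2$ of the degree-one part of the motive of $A$, which lie outside the thick subcategory $d_1{\DMeffgm}(k,{\QQ})$ generated by curves. What your argument actually requires is not that the tensor product be 1-motivic, but that the inclusion $d_1{\DMeffgm}(k,{\QQ})\hookrightarrow {\DMeffgm}(k,{\QQ})$ admit a left adjoint and that one can compute its value on $\cO(M_1)\otimes_{tr}\cO(M_2)$; this is precisely the heavy machinery of Barbieri-Viale and Kahn (the ``motivic Albanese'' of \cite{BK1}), not routine bookkeeping, and it is what their partial result (the case where $M_3$ is semi-abelian) already relies on. Your second step also has a gap: a d\'evissage of ${\Biext}^1$ along the weight filtration produces long exact sequences with ${\Biext}^0$ and ${\Biext}^2$ terms, so ``assembling the pure-weight verifications by linearity'' does not follow formally; controlling those boundary terms is exactly the delicate part of the realization-side argument in \cite{B1}.

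The paper's proof avoids all of this and never leaves the ambient sheaf-theoretic categories. Its two ingredients are: (i) Proposition~\ref{prop}, an adjunction argument showing that the forgetful functors ${\DMeffmeno}(k,A)\hookrightarrow {\cD}^-({\Sh}({\SmCor}(k,A)))\hookrightarrow {\cD}^-({\Sh}({\Sm}(k)))$ induce
\[ {\Hom}_{{\DMeffmeno}(k,A)}(M_1 \otimes_{tr} M_2,M_3) \cong {\Hom}_{{\cD}^-({\Sh}({\Sm}(k)))} (M_1 {\buildrel {\scriptscriptstyle \LL} \over \otimes} M_2,M_3), \]
the point being that $\bR C_*\circ\Phi$ is left adjoint to the composite forgetful functor and its counit on a 1-motive is an isomorphism because 1-motives are already complexes of homotopy invariant sheaves with transfers; and (ii) the general homological interpretation ${\Biext}^i(K_1,K_2;K_3)\cong{\Ext}^i(K_1{\buildrel {\scriptscriptstyle \LL} \over \otimes}K_2,K_3)$ of \cite{B2}, which identifies the left-hand side with the same derived $\Hom$ group after a shift. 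No K\"unneth formula, no weight d\'evissage, and no case analysis on pure pieces is needed. If you want to salvage your route, you would essentially be reproving the Barbieri-Viale--Kahn adjunction; the adjunction-plus-\cite{B2} argument is both shorter and strictly more general in the third variable.
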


This isomorphism answers a question raised by Barbieri-Viale and Kahn in~\cite{BK1} Remark 7.1.3 2). In loc. cit. Proposition 7.1.2 e) they prove the above theorem in the case where $M_3$ is a semi-abelian variety. Our proof is a generalization of theirs.

If $k$ is a field of characteristic 0 embeddable in $\CC,$ by~\cite{D} (10.1.3) we have a fully faithful functor
\begin{equation}\label{realfunct}
    {\T}: \mathrm{1-Mot}(k)  \longrightarrow  {\cMR}(k)
\end{equation}
from the category $\mathrm{1-Mot}(k)$ of 1-motives over $k$ to the Tannakian category ${\cMR}(k)$ of mixed realizations over $k$ (see~\cite{J} I 2.1), which attaches to each 1-motive its Hodge realization for any embedding $k \hookrightarrow {\CC}$, its de Rham realization, its $\mathbf{\ell}$-adic realizations for any prime number $\ell$, and its comparison isomorphisms.
According to~\cite{B1} Theorem 4.5.1, if
$M_i$ (for $i=1,2,3$) is a 1-motive defined over $k$ modulo isogenies, the group of isomorphism classes of biextensions of $(M_1,M_2)$ by $M_3$ is isomorphic to the group of morphisms of the category ${\cMR}(k)$ from the tensor product ${\T}(M_1)\otimes {\T}(M_2)$ of the realizations of $M_1$ and $M_2$ to the realization ${\T}(M_3)$ of $M_3$. Putting together this result with Theorem~\ref{mainthm}, we get the following isomorphisms
\begin{eqnarray} \label{real-voe}
 \nonumber    {\Biext}^1(M_1,M_2;M_3)\otimes {\QQ}  &\cong& {\Hom}_{{\DMeffgm}(k;{\QQ})}({\cO}(M_1) \otimes_{tr} {\cO}(M_2), {\cO}(M_3))\\
 &\cong&
  {\Hom}_{{\cMR}(k)}\big({\T}(M_1)\otimes {\T}(M_2), {\T}(M_3)\big).
 \end{eqnarray}
These isomorphisms fit into the following context: in~\cite{H} Huber constructs a functor
 \[{\cH}: {\DMeffgm}(k,{\QQ})   \longrightarrow  {\cD}({\cMR}(k)) \]
 from Voevodsky's category ${\DMeffgm}(k,{\QQ})$ to the triangulated category  ${\cD}({\cMR}(k))$ of mixed realizations over $k$, which respects the tensor structures. Extending the functor ${\T}$ (\ref{realfunct}) to the derived category ${\cD}^b(1-\mathrm{Isomot}(k))$, we obtain the following diagram
\begin{equation}\label{dia}
\xymatrix{
  {\cD}^b(1-\mathrm{Isomot}(k))  \ar[r]^{\T} \ar[d]_{\cO} & {\cD}({\cMR}(k)) \\
   {\DMeffgm}(k, {\QQ}) \ar[ur]_{\cH} &
}
\end{equation}
The isomorphisms~(\ref{real-voe}) mean that biextensions of 1-motives define in a compatible way bilinear morphisms between 1-motives in each category involved in the above diagram.
Barbieri-Viale and Kahn informed the authors that in~\cite{BK2}
they have proved the commutativity of the diagram~(\ref{dia}) in an axiomatic setting. If $k={\CC}$, they can prove its commutativity without assuming axioms.

We finish generalizing Theorem~\ref{mainthm} to multilinear morphisms between 1-motives.

\section*{Acknowledgment}
The authors are very grateful to Barbieri-Viale and Kahn for several useful remarks improving the first draft of this paper.
 
\section*{Notation}

If $C$ is an additive category, we denote by $C \otimes {\QQ}$ the associated $\QQ$-linear category which is universal for functors from $C$ to a $\QQ$-linear category. Explicitly, the category $C \otimes {\QQ}$ has the same objects as
the category $C$, but the sets of arrows of $C \otimes {\QQ}$ are the sets of arrows of $C$ tensored with $\QQ$, i.e. ${\Hom}_{C \otimes {\QQ}}(-,-)={\Hom}_{C}(-,-) \otimes_{\ZZ} {\QQ}$.

We give a quick review of Voevodsky's category of motives (see~\cite{V}).
 Denote by ${\Sm}(k)$ the category of smooth varieties over a field $k$. Let $A={\ZZ}$ or $\QQ$ be the coefficient ring. Let ${\SmCor} (k,A)$ be the category whose objects are smooth varieties over $k$ and whose morphisms are finite correspondences with coefficients in $A$. It is an additive category.

The \textbf{triangulated category ${\DMeffgm}(k,A)$ of effective geometrical motives over $k$} is the pseudo-abelian envelope of the localization of the homotopy category ${\cH}^b({\SmCor} (k,A))$ of bounded complexes over ${\SmCor} (k,A)$ with respect to the thick subcategory generated by the complexes $X \times_k {\Aa}^1_k \rightarrow X$ and $U \cap V \rightarrow U \oplus V \rightarrow X$ for any smooth variety $X$ and any Zariski-covering $X=U \cup V$.

The \textbf{category of Nisnevich sheaves on ${\Sm}(k)$}, ${\Sh}({\Sm}(k))$, is the category of abelian sheaves on ${\Sm}(k)$ for the Nisnevich topology.

A presheaf with transfers on ${\Sm}(k)$ is an additive contravariant functor from ${\SmCor}(k,A)$ to the category of abelian groups. It is called a Nisnevich sheaf with transfers if the corresponding presheaf of abelian groups on ${\Sm}(k)$ is a sheaf for the Nisnevich topology. Denote by ${\Sh}({\SmCor}(k,A))$ the \textbf{category of Nisnevich sheaves with transfers}. By~\cite{V} Theorem 3.1.4 it is an abelian category.

A presheaf with transfers $F$ is called homotopy invariant if for any smooth variety $X$ the natural map $F(X) \rightarrow F(X \times_k {\Aa}^1_k)$ induced by the projection $X \times_k {\Aa}^1_k \rightarrow X$ is an isomorphism.
A Nisnevich sheaf with transfers is called homotopy invariant if it is homotopy invariant as a presheaf with transferts.

The \textbf{category ${\DMeffmeno}(k,A)$ of effective motivic complexes} is the full subcategory of the derived category ${\cD}^-({\Sh}({\SmCor}(k,A)))$ of complexes of Nisnevich sheaves with transfers bounded from the above, which consists of complexes with homotopy invariant cohomology sheaves.

There exists a functor $ L: {\SmCor}(k,A) \rightarrow {\Sh}({\SmCor}(k,A)) $
which associates to each smooth variety $X$ a Nisnevich sheaf with transfers given by $L(X)(U)= c(U,X)_A$, where $c(U,X)_A$ is the free $A$-module generated by prime correspondences from $U$ to $X$. This functor extends to complexes furnishing a functor
\[ L: {\cH}^b({\SmCor} (k,A)) \longrightarrow {\cD}^-({\Sh}({\SmCor}(k,A))). \]

There exists also a functor $ C_*: {\Sh}({\SmCor}(k,A)) \rightarrow {\DMeffmeno}(k,A) $
which associates to each Nisnevich sheaf with transfers $F$ the effective motivic complex $C_* (F)$ given by $C_n(F)(U)= F(U \times \Delta^n)$ where $\Delta^*$ is the standard cosimplicial object. This functor extends to a functor
\begin{equation}\label{C}
    {\bR} C_*:  {\cD}^-({\Sh}({\SmCor}(k,A))) \longrightarrow {\DMeffmeno}(k,A)
\end{equation}
which is left adjoint to the natural embedding. Moreover, this functor identifies
the category ${\DMeffmeno}(k,A)$ with the localization of ${\cD}^-({\Sh}({\SmCor}(k,A)))$ with respect to the localizing subcategory generated by complexes of the form $L(X \times_k {\Aa}^1_k) \rightarrow L(X)$ for any smooth variety $X$ (see~\cite{V} Proposition 3.2.3).

If $X$ and $Y$ are two smooth varieties over $k$, the equality
\begin{equation}\label{tensor}
    L(X) \otimes L(Y) = L(X \times_k Y)
\end{equation}
defines a tensor structure on the category ${\Sh}({\SmCor}(k,A))$, which extends to the derived category ${\cD}^-({\Sh}({\SmCor}(k,A))).$
The tensor structure on ${\DMeffmeno}(k,A)$, that we denote by $\otimes_{tr}$, is the descent with respect to the projection $ {\bR} C_*$ (\ref{C}) of the tensor structure on ${\cD}^-({\Sh}({\SmCor}(k,A)))$ .

If we assume $k$ to be a perfect field, by~\cite{V} Proposition 3.2.6 there exists a functor
\begin{equation}\label{embedding}
    i: {\DMeffgm}(k,A)  \rightarrow {\DMeffmeno}(k,A)
\end{equation}
which is a full embedding with dense image and which makes the following diagram commutative
\[
\begin{array}{ccc}
{\cH}^b({\SmCor} (k,A))& \stackrel{L}{\rightarrow}& {\cD}^-({\Sh}({\SmCor}(k,A))) \\
 \downarrow &  &  \downarrow {\bR}C_* \\
{\DMeffgm}(k,A)   & \stackrel{i}{\dashrightarrow} & {\DMeffmeno}(k,A).
\end{array}
\]

\begin{rem} For Voevodsky's theory of motives with rational coefficients, the \'etale topology gives the same motivic answer as the Nisnevich topology: if we construct the category of effective motivic complexes using the \'etale topology instead of the Nisnevich topology, we get a triangulated category ${\DMeffmenoet}(k,A)$ which is equivalent as triangulated category to the category ${\DMeffmeno}(k,A)$ if we assume $A= {\QQ}$ (see~\cite{V} Proposition 3.3.2).
\end{rem}

\section{1-motives in Voevodsky's category}

A \textbf{1-motive $M=(X,A,T,G,u)$} over a field $k$ (see~\cite{D} \S 10) consists of

\begin{itemize}
\item a group scheme $X$ over $k$, which is locally for the \'etale
topology, a constant group scheme defined by a finitely generated free
$\ZZ$-module,
\item an extention $G$ of an abelian $k$-variety $A$ by a $k$-torus $T,$
\item a morphism $u:X \longrightarrow G$ of commutative $k$-group schemes.
\end{itemize}

A 1-motive $M=(X,A,T,G,u)$ can be viewed also as a length 1 complex $[X \stackrel{u}{\rightarrow}G]$ of commutative $k$-group schemes. In this paper, as a complex we shall put $X$ in degree 0 and $G$ in degree 1. A morphism of 1-motives is a morphism of complexes of commutative $k$-group schemes. Denote by $1-\mathrm{Mot}(k)$ the category of 1-motives over $k$. It is an additive category but it isn't an abelian category.

Denote by $1-\mathrm{Isomot}(k)$ the $\QQ$-linear category $1-\mathrm{Mot}(k) \otimes {\QQ}$ associated to the category of 1-motives over $k$.
 The objects of $1-\mathrm{Isomot}(k)$ are called 1-isomotifs and the morphisms of $1-\mathrm{Mot}(k)$ which become isomorphisms in $1-\mathrm{Isomot}(k)$ are the
isogenies between 1-motives, i.e. the morphisms of complexes
$[X  \rightarrow G] \rightarrow [X' \rightarrow G']$ such that
 $X \rightarrow X'$ is injective with finite cokernel, and
$G \rightarrow G'$ is surjective with finite kernel.
The category $1-\mathrm{Isomot}(k)$ is an abelian category (see~\cite{O} Lemma 3.2.2).

 Assume now $k$ to be a perfect field. The two main ingredients which furnish the link between 1-motives and Voevodsky's motives are:
\begin{enumerate}
  \item any commutative $k$-group scheme represents a Nisnevich sheaf with transfers, i.e. an object of ${\Sh}({\SmCor}(k,A))$ (\cite{O} Lemma 3.1.2),
  \item if $A$ (resp. $T$, resp. $X$) is an abelian $k$-variety (resp. a $k$-torus, resp. a group scheme over $k$, which is locally for the \'etale topology, a constant group scheme defined by a finitely generated free $\ZZ$-module), then the Nisnevich sheaf with transfers that it represents is homotopy invariant (\cite{O} Lemma 3.3.1).
\end{enumerate}
Since we can view 1-motives as complexes of smooth varieties over $k$, we have a functor from the category of 1-motives to the category ${\cC}({\Sm}(k))$ of complexes over ${\Sm}(k)$. According to (1), this functor factorizes through the category
of complexes over ${\Sh}({\SmCor}(k,A))$:
\[ 1-\mathrm{Mot}(k) \longrightarrow {\cC}({\Sh}({\SmCor}(k,A)))\]
If we tensor with ${\QQ}$, we get an additive exact functor between abelian categories
\[ 1-\mathrm{Isomot}(k) \longrightarrow {\cC}({\Sh}({\SmCor}(k,A))\otimes {\QQ}).\]
Taking the associated bounded derived categories, we obtain a triangulated functor
 \[ {\cD}^b( 1-\mathrm{Isomot}(k)) \longrightarrow {\cD}^b({\Sh}({\SmCor}(k,A))\otimes {\QQ}).\]
Finally, according to (2) this last functor factorizes through the triangulated functor
 \[ {\cO}: {\cD}^b( 1-\mathrm{Isomot}(k)) \longrightarrow {\DMeffmeno}(k,A)\otimes {\QQ}.\]
 By~\cite{O} Proposition 3.3.3 this triangulated functor is fully faithful, and by loc. cit. Theorem 3.4.1 it factorizes through the thick subcategory $ d_1 {\DMeffgm}(k,{\QQ})$ of ${\DMeffgm}(k,{\QQ})$ generated by smooth varieties of dimension $\leq 1$ over $k$ and it induces an equivalence of triangulated categories, that we denote again by ${\cO}$,
 \[ {\cO}: {\cD}^b( 1-\mathrm{Isomot}(k)) \longrightarrow d_1 {\DMeffgm}(k,{\QQ}).\]
In order to simplify notation, if $M$ is a 1-motive, we denote again by $M$ its image in $d_1 {\DMeffgm}(k,{\QQ})$ through the above equivalence of categories and also its image in ${\DMeffmeno}(k,A)$ through the full embedding~(\ref{embedding}).

For the proof of Theorem~\ref{mainthm}, we will need the following

\begin{prop}\label{prop}
Let $M_i$ (for $i=1,2,3$) be a 1-motive defined over $k$.
The forgetful triangulated functors
\[ {\DMeffmeno}(k,A) \stackrel{a}{\hookrightarrow} {\cD}^-({\Sh}({\SmCor}(k,A)))  \stackrel{b}{\hookrightarrow} {\cD}^-({\Sh}({\Sm}(k)))\]
induce an isomorphism
\[ {\Hom}_{{\DMeffmeno}(k,A)}(M_1 \otimes_{tr} M_2,M_3) \cong {\Hom}_{{\cD}^-({\Sh}({\Sm}(k)))} (M_1 {\buildrel {\scriptscriptstyle \LL} \over \otimes} M_2,M_3). \]
\end{prop}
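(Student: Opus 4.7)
The plan is to factor the claimed isomorphism through the intermediate group $\Hom_{\cD^-(\Sh(\SmCor))}(M_1 \otimes^{\LL} M_2, M_3)$, treating the two forgetful functors $a$ and $b$ in turn.

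For the isomorphism induced by $a$, I would exploit the fact that the tensor structure $\otimes_{tr}$ on $\DMeffmeno(k,A)$ is, by its very definition, the descent of $\otimes^{\LL}$ under the projection $\bR C_*$ of~(\ref{C}); hence $M_1 \otimes_{tr} M_2 = \bR C_*(M_1 \otimes^{\LL} M_2)$. Since $\bR C_*$ is left adjoint to the full embedding $a$, and since $M_3$ lies in $\DMeffmeno(k,A)$ (its components are commutative group schemes of the types listed in (1) and (2) above, so that their associated Nisnevich sheaves with transfers are homotopy invariant), the adjunction yields
\[
\Hom_{\DMeffmeno}(M_1 \otimes_{tr} M_2, M_3) = \Hom_{\DMeffmeno}(\bR C_*(M_1 \otimes^{\LL} M_2), M_3) \cong \Hom_{\cD^-(\Sh(\SmCor))}(M_1 \otimes^{\LL} M_2, M_3).
\]

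For the isomorphism induced by $b$, the strategy is to resolve $M_1$ and $M_2$ in $\cD^-(\Sh(\SmCor))$ by complexes whose terms are finite sums of representables $L(Z)$ with $Z$ smooth, and then to form their total tensor complex. By the identity~(\ref{tensor}), each term of the resulting resolution of $M_1 \otimes^{\LL} M_2$ has the form $L(Z_1 \times_k Z_2)$, and Yoneda gives $\Hom_{\Sh(\SmCor)}(L(Z), F) = F(Z)$ for every sheaf with transfers $F$. Coupling this with an injective resolution $M_3 \to I^\bullet$ in $\Sh(\SmCor)$ whose image $b(I^\bullet)$ is an injective resolution of $b(M_3)$ in $\Sh(\Sm)$, both the Nisnevich-with-transfers Hom-complex and its transfer-free analogue are computed term by term as $I^\bullet(Z_1 \times_k Z_2)$, giving the sought identification.

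The main obstacle is the compatibility of resolutions across $b$: one must verify both that $b$ takes injective objects to injective objects (equivalently, that its left adjoint, the sheafification-with-transfers functor, is exact), and that the derived tensor product $M_1 \otimes^{\LL} M_2$ formed in $\cD^-(\Sh(\SmCor))$ coincides after applying $b$ with the derived tensor product formed directly in $\cD^-(\Sh(\Sm))$. Both points rest on the explicit description~(\ref{tensor}) of the tensor of representables and on the fact that the transfer structure on the sheaf represented by a smooth commutative group scheme is canonically determined by the group law; once these are in place, the term-by-term matching sketched above completes the proof.
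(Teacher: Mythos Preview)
Your treatment of the functor $a$ is correct and matches the paper: the adjunction ${\bR}C_* \dashv a$ together with the definition of $\otimes_{tr}$ as the descent of $\otimes^{\LL}$ gives exactly that first isomorphism.

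The handling of $b$, however, has a gap. Your term-by-term matching asserts that both Hom-complexes reduce to $I^\bullet(Z_1 \times_k Z_2)$, but this only works on the transfer side. Yoneda in ${\Sh}({\SmCor}(k,A))$ gives ${\Hom}(L(Z), I^j) = I^j(Z)$; in ${\Sh}({\Sm}(k))$, Yoneda applies to ${\ZZ}(Z)$, not to $b(L(Z))$, and these are different sheaves (sections of $L(Z)$ are finite correspondences, not morphisms of schemes). Hence resolving $M_1, M_2$ by objects $L(Z)$ does not compute the derived tensor product in ${\Sh}({\Sm}(k))$, and ${\Hom}_{{\Sh}({\Sm}(k))}(b(L(Z_1 \times_k Z_2)), b(I^j))$ has no reason to be $I^j(Z_1 \times_k Z_2)$. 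The obstacles you flag at the end are therefore real, and the remedy you propose (appeal to~(\ref{tensor}) and to the canonicity of transfers on group schemes) does not close them.

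The paper sidesteps all of this by treating $b$ in exact parallel with $a$: it uses the \emph{left adjoint} $\Phi$ of $b$ (the free-sheaf-with-transfers functor, which you name but do not exploit). From $\Phi({\ZZ}(X)) = L(X)$ and ${\ZZ}(X) \otimes {\ZZ}(Y) = {\ZZ}(X \times_k Y)$ one sees that $\Phi$ is monoidal on representables, and hence so is the composite ${\bR}C_* \circ \Phi$. The adjunction ${\bR}C_* \circ \Phi \dashv b \circ a$ then gives
\[
{\Hom}_{{\cD}^-({\Sh}({\Sm}(k)))}(M_1 \otimes^{\LL} M_2, M_3) \cong {\Hom}_{{\DMeffmeno}(k,A)}\big({\bR}C_*\Phi(M_1) \otimes_{tr} {\bR}C_*\Phi(M_2),\, M_3\big),
\]
and one finishes by noting that the counit ${\bR}C_*\Phi(M_i) \to M_i$ is an isomorphism, since each $M_i$ is already a complex of homotopy invariant Nisnevich sheaves with transfers. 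No preservation of injectives and no comparison of derived tensor products across $b$ is required.
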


\begin{proof} The forgetful functor $a$ admits as left adjoint the functor ${\bR}C_*$ (\ref{C}). The forgetful functor $b$ from the category of Nisnevich sheaves with transfers to the category of Nisnevich sheaves admits as left adjoint the free sheaf with transfers functor
\begin{equation}\label{phi}
    \Phi:  {\cD}^-({\Sh}({\Sm}(k))) \longrightarrow  {\cD}^-({\Sh}({\SmCor}(k,A)))
\end{equation}
(\cite{V} Remark 1 page 202). If $X$ is a smooth variety over $k$,
let ${\ZZ}(X)$ be the sheafification with respect to the Nisnevisch topology of the presheaf $U \mapsto {\ZZ}[ {\Hom}_{{\Sm}(k)}(U,X)].$ Clearly
$\Phi({\ZZ}(X))$ is the Nisnevich sheaf with transfers $L(X)$.
If $Y$ is another smooth variety over $k$, we have that ${\ZZ}(X) \otimes {\ZZ}(Y) = {\ZZ}(X \times_k Y)$ (see~\cite{M} Lemma 12.14)
and so by formula~(\ref{tensor}) we get
\[ \Phi ({\ZZ}(X) \otimes {\ZZ}(Y))=\Phi({\ZZ}(X)) \otimes_{tr} \Phi({\ZZ}(Y)).\]
The tensor structure on ${\DMeffmeno}(k,A)$ is the descent of the tensor structure on
${\cD}^-({\Sh}({\SmCor}(k,A)))$ with respect to $ {\bR} C_*$ and therefore
\[ {\bR} C_* \circ \Phi ({\ZZ}(X) \otimes {\ZZ}(Y))= {\bR} C_* \circ \Phi({\ZZ}(X)) \otimes_{tr} {\bR} C_* \circ \Phi({\ZZ}(Y)).\]
Using this equality and the fact that the composite ${\bR} C_* \circ \Phi$ is the left adjoint of $b \circ a$, we have
\begin{eqnarray}
\nonumber {\Hom}_{{\cD}^-({\Sh}({\Sm}(k)))} (M_1 {\buildrel {\scriptscriptstyle \LL} \over \otimes} M_2,M_3) & \cong &
{\Hom}_{{\DMeffmeno}(k,A)}( {\bR} C_* \circ \Phi (M_1 {\buildrel {\scriptscriptstyle \LL} \over \otimes} M_2),M_3) \\
\nonumber   & \cong &  {\Hom}_{{\DMeffmeno}(k,A)}( {\bR} C_* \circ \Phi (M_1)  \otimes_{tr}  {\bR} C_* \circ \Phi (M_2),M_3).
\end{eqnarray}
Since 1-motives are complexes of homotopy invariant Nisnevich sheaves with transferts, the counit arrows ${\bR} C_* \circ \Phi (M_i) \rightarrow M_i $ (for $i=1,2$) are isomorphisms and so we can conclude.
\end{proof}

\section{Bilinear morphisms between 1-motives}

Let $K_i=[A_i \stackrel{u_i}{\rightarrow} B_i]$ (for $i=1,2,3$) be a length 1 complex of abelian sheaves (over any topos $\mathrm{{\mathbf{T}}}$) with $A_i$ in degree 1 and $B_i$ in degree 0. A \textbf{biextension $({\mathcal{B}}, \Psi_1, \Psi_2,\lambda)$ of $(K_1,K_2)$ by $K_3$} consists of
\begin{enumerate}
    \item a biextension of $\mathcal{B}$ of $(B_1, B_2)$ by $B_3$;
    \item a trivialization $\Psi_1$ (resp. $\Psi_2$) of the biextension $(u_1,id_{B_2})^*{\mathcal{B}}$
    of $(A_1, B_2)$ by $B_3$ (resp. of the biextension $(id_{B_1},u_2)^*{\mathcal{B}}$ of $(B_1, A_2)$ by $B_3$) obtained as pull-back of ${\mathcal{B}}$ via $(u_1,id_{B_2}): A_1 \times B_2 \rightarrow B_1 \times B_2$ (resp. via $(id_{B_1}, u_2): B_1 \times A_2 \rightarrow B_1 \times B_2$ ). These two trivializations  have to coincide over $A_1 \times A_2$;
    \item a morphism $\lambda: A_1 \otimes A_2 \rightarrow A_3$ such that the composite $ A_1 \otimes A_2 \stackrel{\lambda}{\longrightarrow} A_3 \stackrel{u_3}{\longrightarrow} B_3$ is compatible with the restriction over $A_1 \times A_2$ of the trivializations $\Psi_1$ and $\Psi_2$.
\end{enumerate}
We denote by ${\bBiext}(K_1,K_2;K_3)$ the category of biextensions
of $(K_1,K_2)$ by $K_3$. The Baer sum of extensions defines a group law for the objects of the category ${\bBiext}(K_1,K_2;K_3)$, which is therefore a Picard category (see~\cite{SGA7} Expos\'e VII 2.4, 2.5 and 2.6).
Let ${\Biext}^0(K_1,K_2;K_3)$ be the group of automorphisms of any biextension of $(K_1,K_2)$ by $K_3$, and let ${\Biext}^1(K_1,K_2;K_3)$ be the group of isomorphism classes of biextensions of $(K_1,K_2)$ by $K_3$.

According to the main result of~\cite{B2}, we have the following homological interpretation of the groups ${\Biext}^i(K_1,K_2;K_3)$:
\begin{equation}\label{K}
    {\Biext}^i(K_1,K_2;K_3) \cong {\Ext}^i(K_1 {\buildrel {\scriptscriptstyle \LL} \over \otimes} K_2,K_3) \qquad (i=0,1)
\end{equation}

Since we can view 1-motives as complexes of commutative $S$-group schemes of length 1, all the above definitions apply to 1-motives.

\begin{rem}\label{homolo}
The homological interpretation (\ref{K}) of biextensions computed in~\cite{B2} is done for chain complexes $K_i=[A_i \stackrel{u_i}{\longrightarrow} B_i]$ with $A_i$ in degree 1 and $B_i$ in degree 0. In this paper 1-motives are considered as cochain complexes  $M_i=[X_i \stackrel{u_i}{\rightarrow}G_i]$ with $X$ in degree 0 and $G$ in degree 1. Therefore after switching from homological notation to cohomological notation, the homological interpretation of the group ${\Biext}^1(M_1,M_2;M_3)$ can be stated as follow:
\[
    {\Biext}^1(M_1,M_2;M_3) \cong {\Ext}^1(M_1[1] {\buildrel {\scriptscriptstyle \LL} \over \otimes} M_2[1],M_3[1])
\]
where the shift functor $[i]$ on a cochain complex $C^*$ acts as $(C^*[i])^j=C^{i+j}$.
\end{rem}

\par\noindent \emph{Proof of Theorem~\ref{mainthm}} By proposition~\ref{prop}, we have that
\begin{eqnarray}
\nonumber {\Hom}_{{\DMeffgm} (k,{\QQ})} (M_1 \otimes_{tr} M_2, M_3) & \cong &
{\Hom}_{{\DMeffmeno}(k,A)\otimes {\QQ}}( M_1 \otimes_{tr} M_2, M_3) \\
\nonumber   & \cong & {\Hom}_{{\cD}^-({\Sh}({\Sm}(k)))} (M_1 {\buildrel {\scriptscriptstyle \LL} \over \otimes} M_2,M_3) \otimes {\QQ}.
\end{eqnarray}
On the other hand, according to the remark~\ref{homolo} we have the following homological interpretation of the group ${\Biext}^1(M_1,M_2;M_3)$:
\[ {\Biext}^1(M_1,M_2;M_3) \cong {\Ext}^1(M_1[1] {\buildrel {\scriptscriptstyle \LL} \over \otimes} M_2[1],M_3[1]) \cong
    {\Hom}_{{\cD}^-({\Sh}({\Sm}(k)))} (M_1 {\buildrel {\scriptscriptstyle \LL} \over \otimes} M_2,M_3)\]
and so we can conclude.

\section{Multilinear morphisms between 1-motives}

1-motives are endowed with an increasing filtration, called the weight filtration.
Explicitly, the weight filtration $W_*$ on a 1-motive $M=[X \stackrel{u}{\rightarrow} G] $ is
\begin{eqnarray}
\nonumber  {\W}_{i}(M) &=& M  ~~~~{\rm  for ~ each~} i \geq 0, \\
\nonumber  {\W}_{-1}(M) &=& [0 \longrightarrow G], \\
\nonumber  {\W}_{-2}(M) &=& [0 \longrightarrow  Y(1)], \\
\nonumber  {\W}_{j}(M) &=& 0 ~~~~{\rm  for ~ each~} j \leq -3.
\end{eqnarray}
Defining ${\rm Gr}_{i}^{{\W}}= {\W}_i / {\W}_{i+1}$, we have  ${\rm Gr}_{0}^{{\W}}(M)=
[X \rightarrow 0], {\rm Gr}_{-1}^{{\W}}(M)=
[0  \rightarrow A]$ and $ {\rm Gr}_{-2}^{{\W}}(M)=
[0  \rightarrow  Y(1)].$ Hence locally constant group schemes, abelian varieties and tori are the pure 1-motives underlying $M$ of weights 0,-1,-2 respectively.

The main property of morphisms of motives is that they have to respect the weight filtration, i.e. any morphism $f:A \rightarrow B$ of motives satisfies the following equality
\[ f(A) \cap {\W}_i(B) =  f({\W}_i(A)) \qquad \forall~~ i\in {\ZZ}. \]

Assume $M$ and $M_1, \dots, M_l$ to be 1-motives over a perfect field $k$ and consider a morphism
\[ F: \otimes^l_{j=1}M_j \rightarrow M.\]
Because morphisms of motives have to respect the weight filtration,
the only non trivial components of the morphism $F$ are the components of the morphism
\[ \otimes^l_{j=1}M_j \Big/ {\W}_{-3}(\otimes^l_{j=1}M_j) \longrightarrow M.\]
Using~\cite{B1} Lemma 3.1.3 with $i=-3$, we can write explicitly this last morphism in the following way
\[ \sum_{ \iota_1 < \iota_2 ~\mathrm{and}~ \nu_1< \dots < \nu_{l-2}
 \atop \iota_1 , \iota_2 \notin \{\nu_1, \dots,\nu_{l-2}\} }
 X_{\nu_1}\otimes \dots \otimes X_{\nu_{l-2}}\otimes ( M_{\iota_1} \otimes M_{\iota_2} /{\W}_{-3}( M_{\iota_1} \otimes M_{\iota_2})) \longrightarrow M.\]
To have the morphism
$$X_{\nu_1}\otimes \dots \otimes X_{\nu_{l-2}}\otimes ( M_{\iota_1} \otimes M_{\iota_2} /{\W}_{-3}( M_{\iota_1} \otimes M_{\iota_2})) \longrightarrow M$$
is equivalent to have the morphism
$$ M_{\iota_1} \otimes M_{\iota_2} /{\W}_{-3}( M_{\iota_1} \otimes M_{\iota_2}) \longrightarrow X_{\nu_1}^{\vee}\otimes \dots \otimes X_{\nu_{l-2}}^{\vee}\otimes M$$
where $X_{\nu_n}^{\vee}$ is the $k$-group scheme ${\uHom}( X_{\nu_n},{\ZZ})$ for $n=1, \ldots,l-2$. But as observed in~\cite{B1} \S 1.1 ``to tensor a motive by a motive of weight zero'' means to take a certain number of copies of this motive, and so applying Theorem~\ref{mainthm} we get

\begin{thm}
Let $M$ and $M_1, \dots, M_l$ be 1-motives over a perfect field $k$. Then,
\[ {\Hom}_{{\DMeffgm}(k,{\QQ})}(M_1 \otimes_{tr} M_2 \otimes_{tr} \cdots \otimes_{tr} M_l, M)  \cong \]
\[ \sum {\Biext}^1(M_{\iota_1},M_{\iota_2};X_{\nu_1}^{\vee}\otimes \dots \otimes X_{\nu_{l-2}}^{\vee} \otimes M)\otimes {\QQ} \]
where the sum is taken over all the $(l-2)$-uplets $\{\nu_1, \dots,\nu_{l-i+1}\}$ and all the 2-uplets $\{\iota_1,\iota_{2}\}$ of $\{1, \cdots,l\}$ such that $ \{\nu_1, \dots,\nu_{l-2}\}\cap \{\iota_1, \iota_{2}\} = \emptyset$ and
$ \nu_1 < \dots < \nu_{l-2}$, $  \iota_1 < \iota_{2}.$
\end{thm}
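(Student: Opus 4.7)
The plan is to follow the reduction already sketched in the paragraphs preceding the statement: cut off the deep part of the weight filtration of $\bigotimes_{j=1}^l M_j$, split the resulting quotient by the algebraic Lemma 3.1.3 of~\cite{B1}, move the weight-zero factors across the Hom by tensor-hom adjunction, and then invoke the bilinear Theorem~\ref{mainthm}. The only things that actually need to be checked are that each of these manipulations is legitimate inside $\DMeffgm(k,\QQ)$.

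More concretely, I would first observe that, because $M$ has weights in $\{0,-1,-2\}$ and morphisms of motives preserve weights, every morphism $F\colon\bigotimes_{j=1}^l M_j\to M$ factors through $\bigotimes_{j=1}^l M_j/\W_{-3}(\bigotimes_{j=1}^l M_j)$. Lemma 3.1.3 of~\cite{B1} with $i=-3$ decomposes this quotient as a direct sum indexed by all $2$-subsets $\{\iota_1<\iota_2\}$ of $\{1,\ldots,l\}$ with ordered complement $\{\nu_1<\cdots<\nu_{l-2}\}$, the $(\{\iota\},\{\nu\})$-summand being $X_{\nu_1}\otimes\cdots\otimes X_{\nu_{l-2}}\otimes\bigl(M_{\iota_1}\otimes M_{\iota_2}/\W_{-3}(M_{\iota_1}\otimes M_{\iota_2})\bigr)$. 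Each $X_{\nu_n}$, being a weight-zero 1-motive, is strongly dualizable with dual $X_{\nu_n}^\vee=\uHom(X_{\nu_n},\ZZ)$, so the tensor-hom adjunction gives
\[ \Hom\bigl(X_{\nu_1}\otimes\cdots\otimes X_{\nu_{l-2}}\otimes N,\,M\bigr)\cong\Hom\bigl(N,\,X_{\nu_1}^\vee\otimes\cdots\otimes X_{\nu_{l-2}}^\vee\otimes M\bigr) \]
with $N=M_{\iota_1}\otimes M_{\iota_2}/\W_{-3}(M_{\iota_1}\otimes M_{\iota_2})$. The target on the right is again a 1-motive, and any morphism from $M_{\iota_1}\otimes M_{\iota_2}$ to it annihilates $\W_{-3}$ by the same weight argument; Theorem~\ref{mainthm} thus identifies the Hom group with $\Biext^1(M_{\iota_1},M_{\iota_2};\,X_{\nu_1}^\vee\otimes\cdots\otimes X_{\nu_{l-2}}^\vee\otimes M)\otimes\QQ$. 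Summing over the admissible index sets yields the claimed formula.

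The main obstacle is making precise the slogan \emph{``tensoring by a weight-zero motive means taking copies''}, which is what underlies both the adjunction above and the fact that $X_{\nu_1}^\vee\otimes\cdots\otimes X_{\nu_{l-2}}^\vee\otimes M$ lies in the class of objects to which Theorem~\ref{mainthm} applies. Étale-locally on $\spec k$ each $X_{\nu_n}$ becomes a constant group scheme given by a finite free $\ZZ$-module, so étale-locally $X_{\nu_n}^\vee\otimes(-)$ is literally a finite direct sum of copies of $(-)$; on such a direct sum both the 1-motive structure of the target and the tensor-hom adjunction are manifest. Passage back to $k$ is legitimate by Galois descent, using the equivalence (recalled in the remark of the excerpt) between Nisnevich and étale effective motivic complexes with $\QQ$-coefficients.
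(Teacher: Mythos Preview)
Your proposal is correct and follows essentially the same route as the paper: the paper's proof is precisely the reduction sketched in the paragraphs preceding the theorem, namely truncating by $\W_{-3}$, invoking~\cite{B1} Lemma~3.1.3, moving the weight-zero factors across via duality, and applying Theorem~\ref{mainthm}. Your additional remarks on \'etale-local constancy and Galois descent make explicit what the paper compresses into the citation of~\cite{B1}~\S1.1 for the slogan ``tensoring by weight zero means taking copies''.
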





\begin{thebibliography}{99}
\bibitem[BK1]{BK1} L. Barbieri-Viale and B. Kahn, \emph{On the derived category of 1-motives, I}, Pr\'epublication Math\'ematique de l'IHES (M/07/22), June 2007.
\bibitem[BK2]{BK2} L. Barbieri-Viale and B. Kahn, \emph{On the derived category of 1-motives, II}, in preparation.
\bibitem[B1]{B1} C. Bertolin, \emph{Multilinear morphisms between 1-motives}, to apprear in J. Reine Angew. Math. 2008.
\bibitem[B2]{B2} C. Bertolin, \emph{Homological interpretation of extensions and biextensions of complexes}, submitted.
\bibitem[D]{D} P. Deligne, \emph{Th\' eorie de Hodge III}, pp. 5--77, Inst. Hautes \'Etudes Sci. Publ. Math. No. 44, 1974.
\bibitem[H]{H} A. Huber,\emph{Realization of Voevodsky's motives}, pp. 755--799, J. Algebraic Geom. 9, no. 4, 2000 (Corrigendum:  J. Algebraic Geom. 13, no. 1, pp. 195--207, 2004).
\bibitem[J]{J} U. Jannsen, \emph{Mixed motives and algebraic K-theory}, with appendices by S. Bloch and C. Schoen. Lecture Notes in Mathematics, Vol. 1400. Springer-Verlag, Berlin, 1990.
\bibitem[MVW]{M} C. Mazza, V. Voevodsky, C. Weibel, \emph{ Lecture notes on motivic cohomology}, Clay Mathematics Monographs, 2. American Mathematical Society, Providence, RI; Clay Mathematics Institute, Cambridge, MA, 2006.
\bibitem[O]{O} F. Orgogozo, \emph{Isomotifs de dimension inf\'erieure ou \'egale \`a 1}, pp. 339--360, Manuscripta Math. 115, no. 3, 2004.
\bibitem[SGA7]{SGA7} A. Grothendieck and others, \emph{Groupes de Monodromie en G\'eom\'etrie Alg\'ebrique}, SGA 7 I, Lecture Notes in Mathematics, Vol. 288. Springer-Verlag, Berlin-New York, 1972.
\bibitem[V]{V} V. Voevodsky, \emph{Triangulated category of motives over a field}, in ``Cycles, transfers and motivic cohomology theories'', Princeton Univ. Press, Annals of Math. Studies 143, 2000.
\end{thebibliography}
\end{document}